\newtheorem{theorem}{Theorem}
\newtheorem{lemma}{Lemma}
\newtheorem{corollary}{Corollary}
{\rm}
\begin{document}

\title{A quasispecies continuous contact model in a subcritical regime
\thanks{The research was performed at the
Institute for Information Transmission Problems.
}
}

\author{Sergey Pirogov\thanks{
Institute for Information Transmission Problems, Moscow, Russia
(s.a.pirogov@bk.ru).} \and Elena Zhizhina\thanks{Institute for
Information Transmission Problems, Moscow, Russia (ejj@iitp.ru).} }

\date{\it
The paper is dedicated to the memory of Robert Adolfovich Minlos, outstanding mathematician and generous person.}

\maketitle

\begin{abstract}
We study a non-equilibrium dynamical model: a marked continuous contact model in $d$-dimensional
space, $d \ge 1$.
In contrast with the continuous contact model in a critical regime, see \cite{KKP}, \cite{KPZ}, the model under consideration is in the subcritical regime and it contains an additional spontaneous spatially homogeneous birth from an external source. We prove that this
system has an invariant measure. We prove also that the process starting from any initial distribution converges to this invariant measure.
\\

Keywords: continuous contact model; marked configurations;  correlation functions; statistical dynamics
\end{abstract}

\section{Introduction}

In this paper we study a marked continuous contact model in $d$-dimensional
space, $d \ge 1$, with a spontaneous birth rate. This model can be considered as a
special case of birth-and-death processes in the continuum,
\cite{KKP, KPZ, KS}, and it is inspired by the concept of quasispecies in population genetics, see e.g. \cite{ES, N}.
The phase space of such processes is the space $
\Gamma = \Gamma (R^d \times S)$ of locally finite marked
configurations in $R^d$ with marks $s \in S$ from a compact metric
space $S$. We describe here the stationary
regime and specify relations between solutions of the Cauchy
problem and this stationary regime in the subcritical case, i.e. when
the mortality prevails over reproduction and the deficiency is compensated by the immigration.

The analysis of the model is based on the concept of statistical dynamics. Instead of the construction of the stochastic dynamics as a Markov process on the configuration space, we use here the formal generator of the model for the derivation of the evolution equation for the correlation functions similar to the BBGKY hierarchy for Hamiltonian dynamics. In the general framework of \cite{FKK} we use the hierarchy of equations for time-dependent correlation functions to describe the Markov dynamics of our system, c.f. also \cite{KKP}, \cite{KPZ}.

With biological point of view, the stochastic system under study is
a model of an asexual reproduction under mutations
and selections, where an individual at the point $u \in R^d$ with
the genome $s \in S$ produces an offspring distributed in the
coordinate space and in the genome space with the rate $\alpha(u-v)
Q(s,s')$. The function $Q(s,s')$ is said to be the mutation kernel.
Since mortality exceeds reproduction, the model contains
a spontaneous additional birth that can be interpreted as an exogenous flow
generated by an external source, or an immigration.


\section{The model and the main results}


We consider a quasispecies contact model in a sub-critical regime with spontaneous birth on $M = R^d \times S$,
$d \ge 1$, $S$ is a compact metric space. A heuristic
description of the process is given by a formal generator:
\begin{equation}\label{generator}
(L F)(\gamma) = \sum_{ x \in \gamma}(F(\gamma \backslash x) - F(
\gamma)) +  \int\limits_M \Big(\sum_{y \in \gamma} \kappa a(x,y) + c(x) \Big) (F(\gamma \cup
x ) - F(\gamma)) dZ(x),
\end{equation}
where  $dZ = d\lambda d\nu $ is a product of the Lebesgue measure $
\lambda$ on $R^d$ and a finite Borel measure $\nu$ on $S$ with
$supp \ \nu =S$. Below, see  (\ref{59})-(\ref{korf}),  we will construct
the operator $\hat L^{\ast}$ describing the evolution of the correlation functions
(the BBGKY type hierarchy equations).

Here $b(x, \gamma) = \kappa  \sum_{y \in \gamma} a(x,y)$
are birth rates related to the contact model, $c(x)$ is a spontaneous birth rate (immigration), and
mortality rate equals 1.
We take $a(x,y)$ in the following form:
\begin{equation}\label{a}
a(x,y) = \alpha (\tau(x) - \tau(y)) \, Q(\sigma(x), \sigma(y)),
\end{equation}
$\tau$ and $\sigma$ are projections of $M$ on $R^d$ and $S$
respectively, $\alpha(u) \ge 0$ is a function on $R^d$ such that
\begin{equation}\label{2a}
\int\limits_{R^d} \alpha(u) du \ = \ 1, \quad   \int\limits_{R^d} |u|^2 \alpha(u) du \ < \ \infty,
\end{equation}
the covariance matrix $C$
\begin{equation}\label{2d}
C_{jk} \ = \ \int\limits_{R^d} u_j u_k \alpha(u) du \ - \ m_j m_k, \quad
m_j \ = \ \int\limits_{R^d} u_j \alpha(u) du,
\end{equation}
is non-degenerate and
\begin{equation}\label{2b}
 \hat \alpha (p) \ = \ \int\limits_{R^d} e^{i(p,u)} \alpha(u) du \in
L^1(R^d).
\end{equation}
It follows in particular that $|\hat \alpha (p)|<1$ for all $p \neq
0$.

We assume in what follows that $c(x)=c(\sigma(x)), \ \sigma(x) \in S $, i.e. we consider the spatially homogeneous spontaneous birth rates.
We suppose that the function $Q$ is continuous on
$S \times S$ (and so bounded) and strictly positive.
We consider the integral operator
\begin{equation}\label{Q}
(Qh)(s) \ = \ \int_S Q(s, s') h(s') d\nu(s'), \quad h \in C(S),
\end{equation}
in the Banach space of continuous function $C(S)$.
Then the Krein-Rutman theorem
\cite{KR} implies that there are a positive number $r>0$ and a
strictly positive continuous function $q(s) \in C(S)$, such that $Qq \
= \ rq$. The spectrum of $Q$, except $r$, which is a discrete spectrum
(accumulated to 0, if $S$ is not finite), is contained in the open disk $\{ z: \ |z|<r \}
\subset \mathbb{C}$. This "rest spectrum" is the spectrum of $Q$ on the subspace
"biorthogonal to $q$", i.e. on the subspace of the functions $h(s)$
such that
\begin{equation}\label{F1}
\int_S h(s) \ \tilde q(s) \  d\nu(s)\  =  \ 0.
\end{equation}
Here $\tilde q(s)$ is the strictly positive eigenfunction of the
adjoint operator $Q^{\star} (s,s') = Q(s', s)$.

We assume in what follows that $\kappa < \kappa_{cr}, \ \kappa_{cr}= r^{-1},$
and now including $\kappa_{cr}$ in $Q$ we suppose
that $r=1$, i.e. $Qq=q$. So the "renormalized" critical value of
$\kappa$ equals 1. We also normalize the function $q$ by the condition
\begin{equation}\label{norm}
\int_S q(s) d\nu(s) \ = \ 1.
\end{equation}

Note that the existence problem for Markov processes in $\Gamma$ for
general birth and death rates is an essentially open problem, but for the contact processes it was solved in \cite{KS}.
An alternative way of studying the evolution of the system is to
consider the corresponding statistical dynamics. The latter means
that instead of a time evolution of configurations we consider a
time evolution of initial states (distributions), i.e. solutions of
the corresponding forward Kolmogorov (Fokker-Planck) equation, see
(\cite{FKK, KKZ}) for details.

We should remind basic notations and constructions to derive time
evolution equations on correlation functions of the considered
model. Let ${\cal B}(M)$ be the family of all Borel sets in $M = R^d
\times S$, and ${\cal B}_b (M) \subset {\cal B}(M)$ denotes the
family of all bounded sets from ${\cal B}(M)$. The configuration
space $\Gamma (M)$ consists of all locally finite subsets of $M$:
\begin{equation}\label{F2}
\Gamma \ = \  \Gamma (M) \ =  \ \{ \gamma \subset M: \ |\gamma \cap
\Lambda| < \infty \; \mbox{ for all } \; \Lambda \in {\cal B}_b(M)
\}.
\end{equation}
Together with the configuration space $\Gamma(M)$ we define the
space of finite configurations
\begin{equation}\label{F3}
\Gamma_0 \  = \  \Gamma_0 (M) \ = \ \bigsqcup_{n \in N \cup \{0 \} }
\ \Gamma_0^{(n)},
\end{equation}
where $\Gamma_0^{(n)}$ is the space of $n$-point configurations
$
\Gamma_0^{(n)} \ = \ \{ \eta \subset M : \ |\eta| = n
\}.
$
The space $ \Gamma_0 (M) $ is equipped by the Lebesgue-Poisson measure $\exp (dZ) = 1 + dZ + \frac{dZ \otimes dZ}{2!}+ \ldots $.

We denote 
the set of cylinder functions on $\Gamma$ by ${\cal F}_{cyl}(\Gamma)$.
Each $F \in {\cal F}_{cyl}(\Gamma)$ is characterized by the following relation:
$F(\gamma) = F(\gamma \cap \Lambda)$ for some $\Lambda \in {\cal B}_b
(M)$.
The notation $B_{bs}(\Gamma_0)$ is used for
the set of bounded measurable functions with bounded support, i.e.  $G \in B_{bs}(\Gamma_0)$, if $G$ is a bounded measurable function on $\Gamma_0$, and there exists  $\Lambda \in {\cal B}_b(M)$ and $N \in \mathbb{N}$ such that
\begin{equation}\label{F4}
G|_{\Gamma_0 \backslash \ \bigsqcup_{n=0}^N \ \Gamma_\Lambda^{(n)}} = 0,
\end{equation}
where  $\Gamma_\Lambda^{(n)} \ = \ \{ \eta \subset \Lambda : \ |\eta| = n\}$ is the space of $n$-point configurations from $\Lambda$.

Next we define a mapping from $B_{bs}(\Gamma_0)$ into ${\cal
F}_{cyl}(\Gamma)$ as follows:
\begin{equation}\label{FF}
(K \ G)(\gamma) \ = \ \sum_{\eta \subset \gamma} G(\eta), \quad
\gamma \in \Gamma, \; \eta \in \Gamma_0,
\end{equation}
where the summation is taken over all finite subconfigurations $\eta
\in \Gamma_0$ of the infinite configuration $\gamma \in \Gamma$, see
i.g. \cite{KKP} for details.
Let us remark that this mapping is linear, positivity preserving and injective.
It is called K-transform.

In the same way as in \cite{KKP} we conclude that
the operator $\hat L = K^{-1}LK$ (the
image of $L$ under the K-transform) on functions   $G \in
B_{bs}(\Gamma_0)$ has the following form:
\begin{equation}\label{prop1}
\begin{array}{l}\displaystyle
(\hat L G)(\eta) \ = \  - |\eta | G(\eta)  \  + \    \int\limits_M \kappa
\sum_{x \in \eta} a(y,x) G((\eta \backslash x) \cup y) dZ(y)
\\ [5mm] \displaystyle
+ \int\limits_M \kappa \sum_{x \in \eta} a(y,x) G(\eta \cup y) d Z(y) + \int\limits_M c(y) G(\eta \cup y) d Z(y).
\end{array}
\end{equation}

\medskip

Denote by ${\cal M}^1_{fm}(\Gamma)$ the set of all probability
measures $\mu$ which have finite local moments of all orders, i.e.
\begin{equation}\label{F5}
\int\limits_{\Gamma} |\gamma_{\Lambda}|^n \ \mu (d \gamma) \ < \ \infty
\end{equation}
for  all $\Lambda \in {\cal B}_b(M)$ and $n \in N.$ If a measure
$\mu \in {\cal M}^1_{fm}(\Gamma)$ is locally absolutely continuous
with respect to the Poisson measure (associated with the measure
$dZ$), then there exists the corresponding system of the correlation
functions $k_{\mu}^{(n)}$ of the measure $\mu$, well known in
statistical physics, see e.g. \cite[Ch.4]{R}.
The set of correlation functions $k_{\mu}$ of the measure $\mu$ is defined as
\begin{equation}\label{F6}
\int\limits_{\Gamma} (K G)(\gamma) \mu (d \gamma) = \langle G, k_\mu \rangle,
\end{equation}
where $\langle , \rangle$ is the canonical duality between the functions and densities of measures on the space $\Gamma_0(M)$.

Let $\{ \mu_t \}_{t \ge 0} \subset {\cal M}_{fm}^1
(\Gamma)$ be the evolution of states described by the forward Kolmogorov equation with
the adjoint operator $L^{\ast}$. Then the generator of the evolution of the
corresponding system of correlation functions is defined as
\begin{equation}\label{F7}
\langle \hat L G, k \rangle \ = \  \langle G, \hat L^{\ast} k
\rangle, \quad  G \in B_{bs}(\Gamma_0),
\end{equation}
where the operator $\hat L = K^{-1} L K$ is defined by (\ref{prop1}).

The equations for the correlation functions have the following recurrent form:
\begin{equation}\label{59}
\frac{\partial k^{(n)}}{\partial t} \ = \ \hat L_n^{\ast} k^{(n)} \
+ \ f^{(n)}, \quad n\ge 1; \qquad k^{(0)} \equiv 1.
\end{equation}
Here $f^{(n)}$ are functions on $M^n$, $ f^{(1)}(x) = c(x) = c(\sigma(x)),$ and $f^{(n)}$ are defined for $n \ge 2$ as
\begin{equation}\label{f}
f^{(n)}(x_1, \ldots, x_n) \ = \ \sum_{i=1}^n  k^{(n-1)}(x_1,
\ldots,\check{x_i}, \ldots, x_n) \big(\sum_{j\neq i}^n \kappa a(x_i, x_j) + c(x_i) \big).
\end{equation}
These equations are the analogue of BBGKY equations for the considered system.

The operator $\hat L^{\ast}_n, \; n \ge 1, $ is
defined on the space $X_n = C \left( S^n, \ L^{\infty} ((R^d)^n) \right)$ as:
\begin{equation}\label{korf}
\begin{array}{l} \displaystyle
\hat L^{\ast}_n k^{(n)}(x_1, \ldots, x_n) \ = \ - n k^{(n)}(x_1,
\ldots, x_n)  \\ [3mm] \displaystyle
+ \ \sum_{i=1}^n \int\limits_M \kappa \, a(x_i, y) k^{(n)}(x_1, \ldots, x_{i-1}, y,
x_{i+1}, \ldots, x_n) d Z(y).
\end{array}
\end{equation}
We take any initial (for $t=0$) data
\begin{equation}\label{60}
k^{(n)}(0; x_1, \ldots, x_n) \in X_n.
\end{equation}

Invariant measures with finite moments of the contact process (if they exist) are described
in terms of the correlation functions $k^{(n)}$ on $M^n$ as a positive
solutions of the following system:
\begin{equation}\label{Last}
\hat L^{\ast}_n k^{(n)} + f^{(n)}=0, \quad n \ge 1, \quad
k^{(0)}\equiv 1,
\end{equation}
where $\hat L_n^{\ast}, \, f^{(n)}$ are defined in (\ref{f}) -
(\ref{korf}).

In this paper we prove the existence of the solution $k^{(n)} \in
X_n, \, n \ge 1$ of the system (\ref{Last}), such that $k^{(n)}$
have a specified asymptotics when $|\tau(x_i)-\tau(x_j)|\to\infty$
for all $i \neq j$. We also prove a strong convergence of the
solutions of the Cauchy problem (\ref{59}) - (\ref{60}) to the
solution of the system (\ref{Last}) of the stationary (time-independent)
equations.

\begin{theorem}\label{T1}
 1. Let the birth kernel $a(x,y)$ of the
contact model meet conditions (\ref{a})-(\ref{Q}), and $\kappa < \kappa_{cr}
= r^{-1}$.

Then for any positive continuous immigration rate $c(\sigma(x))$ there exists a
probability measure $\mu_c$ such that its system of
correlation functions $\{k_c^{(n)} \} $ is translation
invariant, solves (\ref{Last}), satisfies the following condition
\begin{equation}\label{Th1}
| k_c^{(n)} (x_1, \ldots, x_n) \ - \  \prod_{i=1}^{n}
k_c^{(1)}(x_i)| \ \to \ 0,
\end{equation}
when  $|\tau(x_i) - \tau(x_j)| \to \infty$ for all $i \neq j$, and
satisfies the following estimate
\begin{equation}\label{estimate}
k_c^{(n)} (x_1, \ldots, x_n) \  \le \  D \ H^n n!
\prod_{i=1}^n q(\sigma(x_i)) \quad \mbox{for any } \ x_1, \ldots,
x_n,
\end{equation}
for some positive constants $H=H(\kappa, Q, \alpha, c), \ D=D(\kappa, Q, \alpha, c)$. Here
$q(s)$ is the normalized eigenfunction of $Q$,
\begin{equation}\label{k1c}
k_c^{(1)}(x) \ = \ k_c^{(1)}(\sigma(x)) \ = \ \big( 1 - \kappa \, Q \big)^{-1} c(\sigma(x)).
\end{equation}

\medskip

2. For any $n \ge 1$ the solution $k^{(n)}(t)$ of the Cauchy
problem (\ref{59}) - (\ref{60}) converges to the solution
$k_c^{(n)}$ (\ref{Th1}) of the system (\ref{Last}) of
stationary (time-independent) equations as  $t \to \infty$:
\begin{equation}\label{Th1-2}
\| k^{(n)}(t) \ - \ k_c^{(n)} \|_{X_n} \ \to \ 0,
\end{equation}
where $X_n = C \left( S^n, \ L^{\infty}((R^d)^n) \right)$.
\end{theorem}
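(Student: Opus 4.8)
The plan is to exploit two structural features: the lower-triangular form of the hierarchy (the equation for $k^{(n)}$ contains $\hat L^\ast_n k^{(n)}$ plus a source $f^{(n)}$ built only from $k^{(n-1)}$), and a spectral gap for the one-particle operator. Introduce the single-variable operator $(Ah)(x)=\kappa\int_M a(x,y)h(y)\,dZ(y)$, which by (\ref{a}) factorizes as convolution by $\alpha$ in the $R^d$-component tensored with $\kappa Q$ in the $S$-component, so that $\hat L^\ast_n=\sum_{i=1}^n(A_i-1)$, where $A_i$ is $A$ acting on the $i$-th variable. The key device is the weighted norm $\|k\|_q=\sup |k(x_1,\dots,x_n)|/\prod_i q(\sigma(x_i))$ on $X_n$, built from the Krein--Rutman eigenfunction $q$ of (\ref{Q})--(\ref{norm}). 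Since $\alpha\ge 0$ with $\int\alpha=1$, convolution by $\alpha$ is an $L^\infty$-contraction, while $Qq=q$ with $q>0$ gives $\|Q\|_q=1$; hence $\|A\|_q\le\kappa<1$. This norm is equivalent to the $X_n$-norm because $q$ is continuous and strictly positive on the compact $S$. Two consequences follow at once: $-\hat L^\ast_n=\sum_i(1-A_i)$ is boundedly invertible with $\|(-\hat L^\ast_n)^{-1}\|_q\le\bigl(n(1-\kappa)\bigr)^{-1}$ via the Neumann series, and the generated semigroup satisfies $\|e^{t\hat L^\ast_n}\|_q\le e^{-n(1-\kappa)t}$.

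For Part~1, I would construct $\{k_c^{(n)}\}$ recursively. At $n=1$ the stationary equation is $(1-A)k^{(1)}=c$; on spatially homogeneous functions $A$ reduces to $\kappa Q$, whose spectral radius is $\kappa<1$, so $k_c^{(1)}=(1-\kappa Q)^{-1}c=\sum_{m\ge0}(\kappa Q)^m c$ converges and is strictly positive, giving (\ref{k1c}). Inductively, set $k_c^{(n)}=(-\hat L^\ast_n)^{-1}f^{(n)}$ with $f^{(n)}$ from (\ref{f}) evaluated at $k_c^{(n-1)}$. Since $a,c\ge 0$ the source is nonnegative and the resolvent, being a Neumann series in the positivity-preserving $A_i$, preserves positivity, so $k_c^{(n)}\ge 0$; translation invariance is inherited because $a(x_i,x_j)$ depends only on $\tau(x_i)-\tau(x_j)$ and $A_i$ commutes with simultaneous shifts. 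The Ruelle bound (\ref{estimate}) comes from the norm recursion: bounding the $n$ summands of $f^{(n)}$, each carrying a factor $(n-1)\kappa\|\alpha\|_\infty\|Q\|_\infty+\|c\|_\infty$, gives $\|f^{(n)}\|_q\le C\,n^2\|k_c^{(n-1)}\|_q$, whence $\|k_c^{(n)}\|_q\le \tfrac{C\,n}{1-\kappa}\|k_c^{(n-1)}\|_q$, which iterates to $D\,H^n n!$. The growth bound together with positivity identifies $\{k_c^{(n)}\}$ as the correlation functions of a probability measure $\mu_c$ by the standard reconstruction (moment) theorem.

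The clustering property (\ref{Th1}) is the part I expect to be the main obstacle. The idea is to analyze $g^{(n)}=k_c^{(n)}-\prod_i k_c^{(1)}$. Applying $\hat L^\ast_n$ and using $(A-1)k_c^{(1)}=-c$ on each factor, the immigration terms cancel against those in $f^{(n)}$, leaving $\hat L^\ast_n g^{(n)}=-\Phi^{(n)}$, where $\Phi^{(n)}$ is built from the couplings $\kappa a(x_i,x_j)$ and lower-order correlation and cluster functions. By (\ref{2b}) and Riemann--Lebesgue, $\alpha$ is continuous and vanishes at infinity, so every $a(x_i,x_j)\to 0$ as $|\tau(x_i)-\tau(x_j)|\to\infty$, and hence $\Phi^{(n)}$ decays once all points separate. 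The crux is to show that inverting through $(-\hat L^\ast_n)^{-1}$ preserves this spatial decay. Here subcriticality is essential: in the difference variables the resolvent symbol is $(\,n-\sum_i\kappa\hat\alpha(p_i)Q\,)^{-1}$, and since $\|\kappa\hat\alpha(p_i)Q\|_q\le\kappa$ uniformly in $p$, it stays boundedly invertible with no zero mode, in contrast to the critical regime. The finite second moment and the non-degeneracy of the covariance $C$ in (\ref{2a})--(\ref{2d}) supply the regularity of $\hat\alpha$ needed to make the associated resolvent kernel integrable, so that decay of $\Phi^{(n)}$ transfers to $g^{(n)}$; an induction on $n$ then promotes pairwise decay to the full factorization (\ref{Th1}).

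For Part~2, write $w^{(n)}(t)=k^{(n)}(t)-k_c^{(n)}$, which solves $\partial_t w^{(n)}=\hat L^\ast_n w^{(n)}+(f^{(n)}(t)-f_c^{(n)})$, the inhomogeneity being linear in $w^{(n-1)}(t)$. For $n=1$ the source vanishes since $f^{(1)}\equiv c$, so $w^{(1)}(t)=e^{t\hat L^\ast_1}w^{(1)}(0)$ decays like $e^{-(1-\kappa)t}$. Assuming $\|w^{(n-1)}(t)\|\to 0$, Duhamel's formula gives $w^{(n)}(t)=e^{t\hat L^\ast_n}w^{(n)}(0)+\int_0^t e^{(t-s)\hat L^\ast_n}\bigl(f^{(n)}(s)-f_c^{(n)}\bigr)\,ds$; the first term decays at rate $n(1-\kappa)$, and since the source tends to $0$ while the kernel decays exponentially, the integral tends to $0$ by a standard convolution estimate. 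Equivalence of $\|\cdot\|_q$ and $\|\cdot\|_{X_n}$ then yields (\ref{Th1-2}) for every $n$, completing the argument.
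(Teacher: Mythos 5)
Your overall skeleton coincides with the paper's: the weighted norm $\|k\|_q=\sup|k|/\prod_i q(\sigma(x_i))$, the bound $\|A_i\|_q\le\kappa<1$, inversion of $-\hat L_n^{\ast}=\sum_i(1-A_i)$ by a Neumann series with norm $\bigl(n(1-\kappa)\bigr)^{-1}$, the induction $K_n=\frac{Cn}{1-\kappa}K_{n-1}$ giving the $H^n n!$ bound, the cancellation that identifies $\prod_i k_c^{(1)}$ as the exact preimage of the immigration part of $f^{(n)}$, and the Duhamel-plus-induction argument for Part 2. Two points, however, are genuine soft spots.

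First, the step you yourself flag as the crux --- that $(-\hat L_n^{\ast})^{-1}$ preserves decay as all $|\tau(x_i)-\tau(x_j)|\to\infty$ --- is left as a sketch, and the sketch points at the wrong mechanism. You propose to control the resolvent in Fourier variables and invoke the second moment of $\alpha$ and the non-degeneracy of $C$ to make ``the resolvent kernel integrable.'' But the resolvent restricted to the invariant subspace $\Lambda=\{\varphi(\tau(x_1),\dots,\tau(x_n))\prod_i q(\sigma(x_i))\}$ is convolution with $\frac{1}{n}\sum_{m\ge 0}\beta^{\ast m}$, where $\beta=\frac{1}{n}\sum_i\kappa\,\alpha(u_i)\prod_{j\ne i}\delta(u_j)$; this object is \emph{not} an integrable density (it carries atoms from the delta factors) and no regularity of $\hat\alpha$ will make it one. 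What actually works --- and is the content of the paper's Lemma~\ref{L3} --- is that it is a \emph{finite positive measure} of total mass $\frac{1}{n(1-\kappa)}$, so decay at infinity of a bounded function is preserved under convolution by dominated convergence, and the general case follows by dominating an arbitrary admissible $f$ by an element of $\Lambda$ and using monotonicity of the resolvent. Neither the second moment condition nor the covariance enters; subcriticality ($\kappa<1$) is what makes the total mass finite.

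Second, your reconstruction of $\mu_c$ from $\{k_c^{(n)}\}$ via ``positivity plus the growth bound'' is insufficient: pointwise nonnegativity of the $k_c^{(n)}$ does not imply that they form the correlation system of a point process --- Lenard positivity ($\langle KG,k\rangle\ge 0$ whenever $KG\ge 0$) is required, and it does not follow from $k_c^{(n)}\ge 0$. The paper closes this gap dynamically: it uses the existence of the process (from \cite{KS}) so that $k^{(n)}(t)$ are correlation functions of genuine states $\mu_t$, proves Part~2 first to get $k_c^{(n)}=\lim_{t\to\infty}k^{(n)}(t)$ in $X_n$, and then inherits Lenard positivity in the limit (Remark 4.2 of \cite{KKP}). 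In other words, in the paper Part~2 is not an independent add-on but the means by which the measure $\mu_c$ in Part~1 is actually produced; your proposal would need to either replicate that or supply a separate proof of Lenard positivity.
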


\section{The proof of Theorem \ref{T1}. Stationary problem. }

In this section we prove the first part of Theorem 1 using the
induction in $n$. For $n=1$ in (\ref{Last}) we have
\begin{equation}\label{8}
c(x)-k^{(1)}(x) + \int_M \kappa \, a(x,y) k^{(1)}(y) d Z(y) = 0, \quad \mbox{where } \; c(x) = c(\sigma(x)).
\end{equation}
Since we are constructing a translation invariant field we will look for
$k^{(1)}(x)$ in the form
\begin{equation}\label{F8}
k^{(1)}(x) \ = \ k^{(1)} (\sigma(x)).
\end{equation}
Then (\ref{8}) can be rewritten as
\begin{equation}\label{10}
c(s)- k^{(1)} (s) + \int_S \kappa \, Q(s,s') k^{(1)}(s') d\nu(s') \ = \ 0, \quad s = \sigma(x),
\end{equation}
which means that
\begin{equation}\label{k1-bis}
k^{(1)}(x) \ = \ k^{(1)}(\sigma(x)) = \big( 1 - \kappa \, Q \big)^{-1} c(\sigma(x)).
\end{equation}
\\

As a warm-up let us solve the equation (\ref{Last}) for the special
case $n=2, \; S=\{0\}, \; Q(0,0)=1, \; c(x) \equiv c$. This means that
$M=R^d$ and no marks. Then $k^{(1)}(x) = \frac{c}{1-\kappa}$ and the equation for $k^{(2)}(x)$ is written as
\begin{equation}\label{13}
\hat L^{\ast}_2 k^{(2)} + f^{(2)}=0,
\end{equation}
with
\begin{equation}\label{14}
f^{(2)}(x_1, x_2) \ = \ \kappa \, \varrho \, ( \alpha(x_1 - x_2) + \alpha(x_2 - x_1))\ + \ 2 \varrho  c,
\end{equation}
where $\varrho = \frac{c}{1-\kappa}$.
Thus $\hat L^{\ast}_2 \ = \  L^{(1)} + L^{(2)}$, where
\begin{equation}\label{15}
L^{(1)} k^{(2)}(x_1, x_2) \ = \ \int_{R^d} \kappa \, \alpha(x_1 - y)
k^{(2)}(y, x_2) dy - k^{(2)}(x_1, x_2),
\end{equation}
and analogously
\begin{equation}\label{16}
L^{(2)} k^{(2)}(x_1, x_2) \ = \ \int_{R^d} \kappa \, \alpha(x_2 - y)
k^{(2)}(x_1, y) dy - k^{(2)}(x_1, x_2).
\end{equation}
Using the translation invariance we have:
\begin{equation}\label{F9}
k^{(2)}(x_1, x_2) \ = \ k^{(2)}(x_1 - x_2).
\end{equation}
After the Fourier transform we can rewrite (\ref{13}) - (\ref{16})
as
\begin{equation}\label{18}
\big( \kappa \, \hat \alpha(p) + \kappa \, \hat \alpha (-p) -2 \big) \ \hat k(p)  \ = \ - \kappa \, \varrho \
\big(\hat \alpha(p) + \hat \alpha (-p) \big) - 2 c \varrho \, \delta (p).
\end{equation}
Therefore,
\begin{equation}\label{19}
\hat k(p)  \ = \ \frac{  \varrho \, \kappa (\hat \alpha(p) + \hat \alpha (-p))}
{2 -  \kappa (\hat \alpha(p) + \hat \alpha (-p))} + \hat A \delta(p),\; \mbox{ where } \; \hat A = \varrho^2.
\end{equation}

Now let us turn to the general case. If for any $n>1$ we succeeded to
solve the equation (\ref{Last}) and express $k^{(n)}$ through
$f^{(n)}$, then knowing the expression of $f^{(n)}$ through
$k^{(n-1)}$ via (\ref{f}), we would get the solution to the full system
(\ref{Last}). So we have to invert the operator $\hat L_n^{\ast}$.

Remind that
\begin{equation}\label{20}
\hat L_n^{\ast} \ = \ \sum_{i=1}^n L^i,
\end{equation}
where
\begin{equation}\label{21}
L^{i} k^{(n)}(x_1, \ldots, x_n) \ =
\end{equation}
\begin{equation}\label{F10}
\int_M \kappa\,  a(x_i, y) k^{(n)}(x_1, \ldots, x_{i-1}, y, x_{i+1}, \ldots,
x_n) d Z(y) - k^{(n)}(x_1, \ldots, x_n)
\end{equation}
are bounded operators in $X_n$.

\begin{lemma}\label{ Proposition 2.}
The operator $e^{t \hat L_n^{\ast}}$ is
monotone.
\end{lemma}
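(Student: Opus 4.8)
The plan is to exploit the fact that $\hat L_n^{\ast}$ splits into a positivity-preserving integral part and a negative multiple of the identity. Writing out (\ref{20})--(\ref{F10}), I would set
\[
A_n k^{(n)}(x_1,\ldots,x_n) \ = \ \sum_{i=1}^n \int_M \kappa\, a(x_i,y)\, k^{(n)}(x_1,\ldots,x_{i-1},y,x_{i+1},\ldots,x_n)\, dZ(y),
\]
so that $\hat L_n^{\ast} = A_n - nI$. The key point is that $A_n$ is a \emph{positive} operator on $X_n$: by (\ref{a}) the kernel factors as $a(x,y) = \alpha(\tau(x)-\tau(y))\,Q(\sigma(x),\sigma(y))$ with $\alpha \ge 0$ and $Q$ strictly positive, while $dZ$ is a positive measure and $\kappa>0$; hence $k^{(n)} \ge 0$ pointwise implies $A_n k^{(n)} \ge 0$ pointwise. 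Moreover $A_n$ is the finite sum (shifted by the identity) of the operators $L^i$ from (\ref{F10}), which are bounded on $X_n$, so $A_n$ is bounded as well.

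Next I would use that $A_n$ commutes with $nI$, so for $t \ge 0$
\[
e^{t\hat L_n^{\ast}} \ = \ e^{t(A_n - nI)} \ = \ e^{-nt}\, e^{t A_n}.
\]
Since $A_n$ is bounded, $e^{tA_n} = \sum_{m=0}^{\infty} \frac{t^m}{m!}\, A_n^m$ converges in operator norm. Each summand is positivity preserving: $A_n$ is positive, a composition of positive operators is positive, hence $A_n^m$ is positive, and $\frac{t^m}{m!} \ge 0$. A norm-convergent sum of positivity-preserving operators is again positivity preserving, so $e^{tA_n}$ maps nonnegative functions to nonnegative functions. Multiplying by the strictly positive scalar $e^{-nt}$ preserves this, whence $e^{t\hat L_n^{\ast}} k^{(n)} \ge 0$ whenever $k^{(n)} \ge 0$. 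By linearity of the semigroup this is exactly the asserted monotonicity: $k_1^{(n)} \le k_2^{(n)}$ implies $e^{t\hat L_n^{\ast}} k_1^{(n)} \le e^{t\hat L_n^{\ast}} k_2^{(n)}$.

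There is no serious obstacle here; the only points requiring care are (i) fixing that positivity is meant in the pointwise (i.e.\ $\nu\otimes\lambda$-a.e.) sense compatible with the norm of $X_n = C\!\left(S^n, L^{\infty}((R^d)^n)\right)$, so that the cone of nonnegative functions is closed and norm limits of nonnegative functions remain nonnegative, and (ii) confirming the boundedness of $A_n$ that justifies the operator-norm convergence of the exponential series. Both follow from assumptions already recorded, namely $\|\alpha\|_{L^1}=1$ from (\ref{2a}), the boundedness of $Q$, and the finiteness of $\nu$.
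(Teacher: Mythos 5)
Your proof is correct and follows essentially the same route as the paper: the authors likewise write $e^{t\hat L_n^{\ast}}$ as $e^{-nt}$ times the exponential of the positive integral part (they factor it coordinatewise as $\otimes_i e^{-t}e^{tA_i}$, you combine the $A_i$ into a single $A_n$), and conclude monotonicity from positivity of $a(x,y)$ and the exponential series. No substantive difference.
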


\begin{proof} The monotonicity of the operator $e^{t \hat
L_n^{\ast}}$ follows from (\ref{20}) - (\ref{21}):
\begin{equation}\label{F11}
e^{t \hat L_n^{\ast}} \ = \ \otimes_{i=1}^n  e^{t L^{i}}, \quad e^{t
L^{i}} \ = \ e^{-t} e^{t A_{i}},
\end{equation}
and the positivity of operators
\begin{equation}\label{F12}
A_i k^{(n)} \ = \ \int_M \kappa \, a(x_i, y) k^{(n)} (x_1, \ldots, x_{i-1}, y,
x_{i+1}, \ldots, x_n) d Z(y).
\end{equation}
\end{proof}

\begin{lemma}\label{L3}
If the function $f(x_1, \ldots, x_n)$ is bounded and satisfies the condition
\begin{equation}\label{condition-f}
f(x_1, \ldots, x_n) \to 0 \; \mbox{when} \; |\tau(x_i) - \tau(x_j)| \to \infty \; \mbox{for all} \; i \neq j \; \mbox{uniformly in} \; \sigma(x_1), \ldots,  \sigma(x_n),
\end{equation}
then the function
\begin{equation}\label{g}
g(x_1, \ldots, x_n) \ =  \ \Big(\int_0^{\infty} e^{t \hat L_n^{\ast}} f \ dt \Big) (x_1, \ldots, x_n) = \big( - \hat L_n^{\ast} \big)^{-1} f (x_1, \ldots, x_n)
\end{equation}
exists and satisfies the same condition.
\end{lemma}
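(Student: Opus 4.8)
The plan is to work in a weighted supremum norm on $X_n$ in which the semigroup $e^{t\hat L_n^{\ast}}$ contracts at a uniform exponential rate; this single device will give, at once, the convergence of the integral in (\ref{g}), the identification of $g$ with $(-\hat L_n^{\ast})^{-1} f$, and the quantitative control needed to propagate the decay condition (\ref{condition-f}). Since $S$ is compact and $q$ is continuous and strictly positive, we have $0 < \inf_S q \le \sup_S q < \infty$, so the weighted norm
\begin{equation*}
\| k \|_{q} \ = \ \sup_{x_1, \ldots, x_n} \frac{|k(x_1, \ldots, x_n)|}{\prod_{i=1}^n q(\sigma(x_i))}
\end{equation*}
is equivalent to the norm of $X_n$, and the hypothesis (\ref{condition-f}) is unchanged under this reweighting. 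The key computation is that $A_i$ from (\ref{F12}) is a contraction with factor $\kappa$ in this norm: if $|k| \le \prod_j q(\sigma(x_j))$, then carrying out the $\tau(y)$-integral of $\alpha$, which gives $1$ by (\ref{2a}), and the $\sigma(y)$-integral of $Q(\sigma(x_i),\sigma(y)) q(\sigma(y))$, which gives $q(\sigma(x_i))$ since $Qq = q$, yields $|A_i k| \le \kappa \prod_j q(\sigma(x_j))$. Hence $\|A_i\|_q \le \kappa$, and because the $A_i$ act on distinct coordinates and commute, the product structure (\ref{F11}) gives $\| e^{t \hat L_n^{\ast}} \|_q = e^{-nt}\| e^{t\sum_i A_i}\|_q \le e^{-n(1-\kappa)t}$.

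First I would settle existence and the inverse identity. The estimate $\| e^{t\hat L_n^{\ast}} f\|_q \le e^{-n(1-\kappa)t}\|f\|_q$ makes the integral in (\ref{g}) absolutely convergent in $X_n$, so $g$ exists, is bounded, and inherits continuity in the mark variables, i.e.\ $g \in X_n$. Since $\hat L_n^{\ast}$ is bounded (see (\ref{F10})) it commutes with the integral, and the fundamental theorem of calculus gives
\begin{equation*}
\hat L_n^{\ast} g \ = \ \int_0^{\infty} \frac{d}{dt}\, e^{t \hat L_n^{\ast}} f \, dt \ = \ \lim_{t \to \infty} e^{t\hat L_n^{\ast}} f \ - \ f \ = \ -f ,
\end{equation*}
the limit vanishing by the exponential bound. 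Thus $0$ lies in the resolvent set of $\hat L_n^{\ast}$ and $g = (-\hat L_n^{\ast})^{-1} f$, as claimed.

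The main work, and the principal obstacle, is to show that $g$ again satisfies (\ref{condition-f}). I would split $\int_0^{\infty} = \int_0^T + \int_T^{\infty}$. The tail is uniformly small, $\|\int_T^{\infty} e^{t\hat L_n^{\ast}} f \, dt\|_q \le \big(n(1-\kappa)\big)^{-1} e^{-n(1-\kappa)T}\|f\|_q$, independently of the spatial positions. For the bounded part I would write $e^{t \hat L_n^{\ast}} = e^{-nt}\bigotimes_i e^{t A_i}$ as an integral operator whose $i$-th kernel is $\sum_{m \ge 0}\frac{(t\kappa)^m}{m!}\alpha^{\ast m}(\tau(x_i)-\tau(y)) Q^m(\sigma(x_i),\sigma(y))$. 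The crucial simplification is that, since $Q^m q = q$ for every $m$, integrating this kernel against $q$ in the mark variable collapses the $Q$-dependence: the $q$-weighted mass of displacements with $|\tau(x_i)-\tau(y)| \ge \rho'$ equals $e^{t\kappa}\mu_t^{\kappa}(\{|u|\ge\rho'\})\, q(\sigma(x_i))$, where $\mu_t^{\kappa} = e^{-t\kappa}\sum_{m}\frac{(t\kappa)^m}{m!}\alpha^{\ast m}$ is a compound Poisson probability measure on $R^d$. The geometric point is that if $\min_{i \neq j}|\tau(x_i)-\tau(x_j)| > R$, then any configuration $\vec y$ with two coordinates close, $\min_{i\neq j}|\tau(y_i)-\tau(y_j)| \le \rho$, forces $|\tau(y_i)-\tau(x_i)| \ge (R-\rho)/2$ for some $i$. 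Splitting the $\vec y$-integral accordingly, on the complementary ``good'' set $|f|$ is small by hypothesis, while on the ``bad'' set I bound $|f|$ by $\|f\|_{\infty}$ and estimate the kernel mass of a displacement $\ge (R-\rho)/2$ by Chebyshev's inequality, using that $\mu_t^{\kappa}$ has second moment bounded uniformly for $t \in [0,T]$ thanks to the finite variance in (\ref{2a}). This makes the bad contribution $O(R^{-2})$ uniformly for $t \le T$ and uniformly in the marks.

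Finally I would fix the constants in the correct order: given $\eps > 0$, first choose $T$ to make the tail $< \eps/3$; then, via the hypothesized decay of $f$, fix the threshold $\rho$ so that the good contribution over $[0,T]$ is $< \eps/3$; and then take $R$ large enough that the Chebyshev bound on the bad contribution over $[0,T]$ is $< \eps/3$. This yields $|g(x_1,\ldots,x_n)| < \eps$ whenever $\min_{i\neq j}|\tau(x_i)-\tau(x_j)| > R$, uniformly in $\sigma(x_1), \ldots, \sigma(x_n)$, which is exactly (\ref{condition-f}) for $g$. The delicate step is the uniform-in-$t$ control of the kernel tail on $[0,T]$: this is where the second-moment assumption on $\alpha$ is essential, since it is precisely what keeps the diffusive-ballistic spreading of the compound Poisson kernel from destroying the separation of coordinates on bounded time intervals.
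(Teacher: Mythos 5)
Your proof is correct, but the route you take for the key step (propagating the decay condition) is genuinely different from the paper's. The paper does not touch the semigroup at all in this lemma: it writes $\big(-\hat L_n^{\ast}\big)^{-1}=\frac{1}{n}\sum_{m\ge 0}A^m$ with $A=\frac1n\sum_i A_i$, restricts to the invariant subspace $\Lambda$ of functions $\varphi(\tau(x_1),\dots,\tau(x_n))\prod_i q(\sigma(x_i))$ (where, exactly as in your computation, $Qq=q$ collapses the mark dependence), observes that on $\Lambda$ the resolvent is convolution with a single finite positive measure of total mass $\frac{1}{n(1-\kappa)}$, and gets the decay of $\mathbb{E}\,\varphi(\tau(x)+\xi)$ from the Lebesgue dominated convergence theorem alone; general $f$ is then handled by dominating $|f|$ by an element of $\Lambda$ (possible because $q$ is bounded below and the decay in \eqref{condition-f} is uniform in the marks) and invoking monotonicity of the resolvent. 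Your version instead splits $\int_0^\infty=\int_0^T+\int_T^\infty$, makes the kernel of $e^{tA_i}$ explicit as a compound Poisson measure, and controls the ``bad'' region by Chebyshev. This buys you a quantitative bound (an explicit rate in $R$, $\rho$, $T$) and a transparent probabilistic picture, at the cost of invoking the second-moment condition in \eqref{2a}, which the paper's dominated-convergence argument shows is not actually needed for this lemma --- integrability of the tail of the fixed finite measure $\beta_{max}$ suffices, with no moment assumption. Your existence argument via the weighted contraction $\|e^{t\hat L_n^{\ast}}\|_q\le e^{-n(1-\kappa)t}$ is also sound and is essentially the estimate the paper itself uses later, in the Cauchy-problem section.
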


\begin{proof}

First consider the restriction of $\hat L_n^{\ast}$ to the invariant
subspace $\Lambda$ consisting of the functions of the form
\begin{equation}\label{F13}
G_{\varphi, q}(x) = \varphi(\tau(x_1), \ldots, \tau(x_n)) \prod_{i=1}^n q(\sigma(x_i)),
\end{equation}
where $\varphi (w_1, \ldots, w_n) \in L^{\infty}((R^d)^n), \; Qq = q$.

The operator $\hat L_n^{\ast}$ acts on these functions as
\begin{equation}\label{24}
L_{n, max}  \ = \ \sum_{i=1}^n L^i_{max},
\end{equation}
where
\begin{equation}\label{25}
L^{i}_{max} \ G_{\varphi, q} (x) \  =
\end{equation}
\begin{equation}\label{F14}
\prod_{i=1}^n q(\sigma(x_i)) \left( \int_{R^d} \kappa \, \alpha(w_i-u)
\varphi(w_1, \ldots, w_{i-1}, u, w_{i+1}, \ldots, w_n) du - \varphi
(w_1, \ldots, w_n)\right)
\end{equation}
due to the equality $Qq=q$. Remind that
$\kappa_{cr}$ is "absorbed" in $Q$ and $\kappa<1$. Formula (\ref{25}) means that in
this case we have only spatial convolutions and no integration over
$S$.

Using that  $\hat L_n^{\ast} = \sum_{i=1}^n A_i - n$ and the spectral radii of operators $A_i, i=1, \ldots, n,$ equal to $\kappa<1$, we have
\begin{equation}\label{F15}
 \big( - \hat L_n^{\ast} \big)^{-1} \ = \ \frac{1}{n} \Big( 1 - \frac{1}{n} \sum_{i=1}^n A_i
 \Big)^{-1}.
\end{equation}
Notice that this formula again implies the monotonicity of the operator $\big( - \hat L_n^{\ast} \big)^{-1}$.
Denoting $A= \frac{1}{n} \sum_{i=1}^n A_i$  we get
\begin{equation}\label{F16}
\big( - \hat L_n^{\ast} \big)^{-1} \ = \  \frac{1}{n} \sum_{m=0}^{\infty} A^m.
\end{equation}
On the invariant subspace $\Lambda$ the operator $A$ is a convolution operator with the distribution
\begin{equation}\label{F17}
\beta(u) =  \frac{1}{n} \sum_{i=1}^n \kappa \, \alpha(u_i) \prod_{j \neq i} \delta(u_j), \; u=(u_1, \ldots, u_n), \; u_i \in R^d.
\end{equation}
Its integral equals to $\kappa<1$. Analogously the operator $A^m$ is a convolution with the distribution $\beta^{\ast m}$ whose integral is equal to $\kappa^m$. Consequently $\big( - L_{n, max} \big)^{-1}$ on $\Lambda$  is also the convolution with the distribution $\beta_{max}(u)$ whose integral is equal to $\frac{1}{n(1-\kappa)}$.
For $ G_{\varphi, q}(x)$ we have
\begin{equation}\label{F18}
\big( - L_{n, max} \big)^{-1} G_{\varphi, q}(x) \ = \ \frac{1}{n(1-\kappa)}  \prod_{i=1}^n q(\sigma(x_i)) \, {\mathbb E} \varphi (\tau(x)+ \xi),
\end{equation}
where $\tau(x) = (\tau(x_1), \ldots, \tau(x_n))$ and $\xi = (\xi_1, \ldots, \xi_n)$ is a random vector with the probability distribution $n(1-\kappa)\, \beta_{max}(u)$.

Thus if $f \in \Lambda$ satisfies condition \eqref{condition-f} of the Lemma, then the function ${\mathbb E} \varphi (\tau(x)+ \xi)$ satisfies the same condition by the Lebesgue dominated convergence theorem.

To prove the general case of the Lemma we notice that any function satisfying condition \eqref{condition-f} can be estimated in the absolute value by a function from $\Lambda$ satisfying the same condition. Then the conclusion of Lemma \ref{L3} follows from monotonicity of $\big( - \hat L_n^{\ast} \big)^{-1}$.
\end{proof}

From the proof of Lemma \ref{L3} we obtain the following estimate.

\begin{corollary}\label{corollary}
If a function $f(x_1, \ldots, x_n)$ is estimated in the absolute value by a function from the invariant subspace $\Lambda$ with
$|\varphi(w)| \le C$, then for the function $\big( - \hat L_n^{\ast} \big)^{-1} f$ the same estimate in the absolute value holds with constant $\frac{C}{n(1-\kappa)}$.
\end{corollary}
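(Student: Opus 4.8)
The plan is to obtain the corollary as an immediate consequence of two facts already assembled in the proof of Lemma \ref{L3}: the monotonicity of the inverse operator $\big(-\hat L_n^{\ast}\big)^{-1}$, and the explicit probabilistic formula (\ref{F18}) for its action on the invariant subspace $\Lambda$. First I would record the monotonicity: it follows either from the monotonicity of the semigroup $e^{t\hat L_n^{\ast}}$ (the preceding lemma) through the representation $\big(-\hat L_n^{\ast}\big)^{-1}=\int_0^{\infty}e^{t\hat L_n^{\ast}}\,dt$, or directly from the Neumann series remark following (\ref{F15}), since each $A_i$ is positivity preserving.

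Let $g=G_{\varphi,q}\in\Lambda$ be the dominating function, so that $|f|\le g$ pointwise with $|\varphi(w)|\le C$; since $q>0$ and $g\ge 0$ we may take $0\le\varphi\le C$. Applying the monotone operator to the two-sided bound $-g\le f\le g$ yields
\begin{equation*}
\big|\big(-\hat L_n^{\ast}\big)^{-1} f\big|\ \le\ \big(-\hat L_n^{\ast}\big)^{-1} g .
\end{equation*}
Because $g\in\Lambda$, formula (\ref{F18}) evaluates the right-hand side exactly:
\begin{equation*}
\big(-\hat L_n^{\ast}\big)^{-1} g\,(x)\ =\ \frac{1}{n(1-\kappa)}\prod_{i=1}^n q(\sigma(x_i))\ {\mathbb E}\,\varphi(\tau(x)+\xi),
\end{equation*}
where $\xi$ is distributed according to the \emph{probability} law $n(1-\kappa)\,\beta_{max}$. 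Since $\varphi\le C$ and $\xi$ genuinely carries a probability distribution, the expectation is bounded by $C$, and therefore
\begin{equation*}
\big|\big(-\hat L_n^{\ast}\big)^{-1} f\,(x)\big|\ \le\ \frac{C}{n(1-\kappa)}\prod_{i=1}^n q(\sigma(x_i)),
\end{equation*}
which is precisely a function of the subspace $\Lambda$ with constant $\frac{C}{n(1-\kappa)}$, as claimed.

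There is no genuine obstacle here, as the substantive computation was already carried out in Lemma \ref{L3}; the statement is a bookkeeping consequence. The only two points requiring a moment of care are, first, that monotonicity must be applied to the two-sided estimate $-g\le f\le g$ (not to $f$ alone) in order to control the absolute value $\big|\big(-\hat L_n^{\ast}\big)^{-1} f\big|$ rather than merely the signed image; and second, that the normalizing factor $n(1-\kappa)$ is exactly what turns $\beta_{max}$ into a probability density, so that the averaging ${\mathbb E}\,\varphi(\tau(x)+\xi)$ inherits the bound $C$ with no spurious extra constant. Tracking the single prefactor $\tfrac{1}{n(1-\kappa)}$ through (\ref{F18}) then gives the stated constant.
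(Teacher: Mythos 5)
Your argument is correct and is exactly the one the paper intends when it says the corollary follows ``from the proof of Lemma \ref{L3}'': monotonicity of $\big(-\hat L_n^{\ast}\big)^{-1}$ applied to the two-sided bound by the dominating element of $\Lambda$, followed by the explicit evaluation (\ref{F18}) with the probability normalization $n(1-\kappa)\beta_{max}$ yielding the constant $\frac{C}{n(1-\kappa)}$. No discrepancy with the paper's route.
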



\medskip

Next we will construct a solution $k^{(n)} \  = \ \big( - \hat L_n^{\ast} \big)^{-1} f^{(n)}$ of the system (\ref{Last}) satisfying (\ref{Th1}) and meeting the estimate
\begin{equation}\label{est}
k^{(n)} (x_1, \ldots, x_n) \ \le \ K_n \prod_{i=1}^n q(\sigma(x_i))
\end{equation}
where $K_n = H^n n!, \ H$ is a constant.
We suppose by induction that
\begin{equation}\label{34}
k^{(n-1)} (x_1, \ldots, x_{n-1}) \ \le \ K_{n-1} \prod_{i=1}^{n-1}
q(\sigma(x_i)).
\end{equation}
Using (\ref{f}) we conclude that the function $f^{(n)}$ can be estimated as
\begin{equation}\label{F19}
f^{(n)}(x_1, \ldots, x_n) \ \le \  K_{n-1} C \, n^2 \prod_{i=1}^{n}
q(\sigma(x_i))
\end{equation}
with a constant $C>0$.

From Corollary \ref{corollary} it follows that  $k^{(n)} \  = \ \big( - \hat L_n^{\ast} \big)^{-1} f^{(n)}$ satisfies the same estimate \eqref{34} with $K_n = \frac{ C \, n}{1-\kappa} K_{n-1}$ and so
\begin{equation}\label{F20}
K_n = \frac{C^n \, n!}{(1-\kappa)^n}.
\end{equation}
Thus
\begin{equation}\label{est43}
k^{(n)} (x_1, \ldots, x_n) \ \le \  H^n n! \prod_{i=1}^n q(\sigma(x_i)) \quad \mbox{with } \; H= \frac{C}{1-\kappa}.
\end{equation}

To complete the first (stationary) part of Theorem \ref{T1} we need to verify \eqref{Th1}. We apply again the induction in $n$.
For $n=1$, it is trivial. If \eqref{Th1} is valid for $k^{(n-1)}$, then from \eqref{f} we obtain the following relation for $f^{(n)}$:
\begin{equation}\label{F21}
f^{(n)}(x_1, \ldots, x_n) \ - \ \sum_{i=1}^n c(x_i) \prod_{i \neq j} k^{(1)} (x_j) \ \to \ 0,
\end{equation}
when  $|\tau(x_i) - \tau(x_j)| \to \infty$ for all  $i \neq j$  uniformly in  $\sigma(x_1), \ldots,  \sigma(x_n)$.
Thus by Lemma \ref{L3} we get
\begin{equation}\label{forconv}
\big( - \hat L_n^{\ast} \big)^{-1} f^{(n)} (x_1, \ldots, x_n) \ - \ \big( - \hat L_n^{\ast} \big)^{-1} \sum_{i=1}^n c(x_i) \prod_{i \neq j} k^{(1)} (x_j) \ \to \ 0
\end{equation}
when  $|\tau(x_i) - \tau(x_j)| \to \infty$ uniformly in  $\sigma(x_1), \ldots,  \sigma(x_n)$.

We define
\begin{equation}\label{F22}
k_{as} (\sigma(x_1), \ldots, \sigma(x_n)) := \prod\limits_{i=1}^n k^{(1)} (x_i)
= \prod\limits_{i=1}^n \big( 1 - \kappa Q_i \big)^{-1} c(\sigma (x_i)),
\end{equation}
where $ k^{(1)} (x)$ was defined by \eqref{k1-bis} and  $Q_i$ is the operator $Q$ acting in the space of functions of $\sigma(x_i)$.
Using \eqref{korf}, \eqref{a} - \eqref{2a} and the fact that $k_{as}$ is a function of variables $\sigma (x_i)$ we conclude
\begin{equation}\label{F23}
(- \hat L_n^{\ast}) k_{as} (\sigma(x_1), \ldots, \sigma(x_n)) = n k_{as} (\sigma(x_1), \ldots, \sigma(x_n)) -  \sum_{i=1}^n \kappa Q_i  k_{as} (\sigma(x_1), \ldots, \sigma(x_n))
\end{equation}
\begin{equation}\label{F24}
=  \sum_{i=1}^n (1-\kappa Q_i)  \prod\limits_{j=1}^n ( 1 - \kappa Q_j )^{-1} c(\sigma (x_j)) = \sum_{i=1}^n c(\sigma (x_i)) \prod\limits_{j \neq i} k^{(1)} (x_j).
\end{equation}
Consequently,
\begin{equation}\label{F25}
(- \hat L_n^{\ast})^{-1} \sum_{i=1}^n c(\sigma (x_i)) \prod\limits_{j \neq i} k^{(1)} (x_j) =  k_{as} (\sigma(x_1), \ldots, \sigma(x_n)) = \prod\limits_{i=1}^n k^{(1)} (x_i)
\end{equation}
and by  (\ref{Last})
\begin{equation}\label{F26}
\big( - \hat L_n^{\ast} \big)^{-1} f^{(n)} (x_1, \ldots, x_n) = k^{(n)}(x_1, \ldots, x_n).
\end{equation}
Thus the convergence in \eqref{forconv} implies \eqref{Th1} for $ k^{(n)}$.



\medskip

Thus we proved the existence of solutions $\{ k_c^{(n)} \}$
of the system (\ref{Last}) corresponding to the stationary problem.
To verify that this system of correlation function is associated
with a measure $\mu_c$ on the configuration space, we will prove
in the next section that the measure $\mu_c$ can be constructed
as a limit of an evolution of measures $\mu (t)$
associated with the solutions of the Cauchy problem (\ref{59}) with
corresponding initial data (\ref{60}).

\section{The proof of Theorem \ref{T1}. The Cauchy problem.  }

In this section we find the solution of the Cauchy problem
(\ref{59}) - (\ref{60}) and prove the convergence (\ref{Th1-2}).
Using Duhamel formula we have
\begin{equation}\label{61}
k^{(n)}(t) \ = \  e^{t \hat L_n^{\ast}} k^{(n)}(0) \ + \  \int_0^t e^{(t-s) \hat
L_n^{\ast}} f^{(n)}(s) \ ds,
\end{equation}
where $f^{(n)}(s)$ is expressed through $k^{(n-1)}(s)$ via
(\ref{f}).

We have
\begin{equation}\label{64}
k^{(n)}(t) -  k_c^{(n)}  = e^{t \hat L_n^{\ast}} \big(k^{(n)}(0) - k_c^{(n)} \big)  +  \int_0^t
e^{(t-s) \hat L_n^{\ast}} \big( f^{(n)}(s) -  f_c^{(n)} \big) \ ds.
\end{equation}
Here $f_c^{(n)}$ is expressed in terms of $k_c^{(n-1)}$
by (\ref{f}), and the equation $ \hat L_n^{\ast}
k_c^{(n)} \ = \ - f_c^{(n)}$ implies that
\begin{equation}\label{F30}
\left( e^{t \hat L_n^{\ast}} - 1 \right) k_c^{(n)} \ = \ -
\int_0^t \frac{d}{ds} e^{(t-s) \hat L_n^{\ast}} k_c^{(n)} ds \
\ = \ - \int_0^t e^{(t-s) \hat L_n^{\ast}}  f_c^{(n)} \ ds
\end{equation}
We shall prove now that both terms in (\ref{64}) converge to 0 in
sup-norm of $X_n$.

First we remind that $ k^{(n)}(0) - k_c^{(n)}$ is the element of the space $X_n$, and thus it can be estimated in absolute value as
\begin{equation}\label{F31}
| k^{(n)}(0) -  k_c^{(n)} | \le A \prod_{i=1}^{n} q(\sigma(x_i)).
\end{equation}
On the right-hand side of this inequality there is the eigenfunction of the monotone operator $e^{t \hat L_n^{\ast}}$. Therefore,
\begin{equation}\label{F32}
\big|  e^{t \hat L_n^{\ast}} (k^{(n)}(0) -  k_c^{(n)}) \big|  \le e^{ - n(1-\kappa) t} A \prod_{i=1}^{n} q(\sigma(x_i))
\end{equation}
which tends to 0  as $t \to \infty$.


For estimation of the second term in \eqref{64} we use the induction. For $n=1$
we have $f^{(1)} = f_c^{(1)} = c(x)$ and hence
\begin{equation}\label{F35}
k^{(1)}(t) -  k_c^{(1)} = e^{t \hat L_n^{\ast}} (k^{(1)}(0) -  k_c^{(1)}).
\end{equation}
As we have seen this function tends to 0 in $X_n$-norm and
\begin{equation}\label{F36}
| k^{(1)}(t) -  k_c^{(1)} | \le \gamma_1 (t) q(\sigma(x)),
\end{equation}
where $\gamma_1(t) \to 0$ as $ t \to \infty$.

Suppose that
\begin{equation}\label{suppose}
| k^{(n-1)}(t) -  k_c^{(n-1)} | \le \gamma_{n-1} (t) \prod_{i=1}^{n-1} q(\sigma(x_i)),
\end{equation}
where $\gamma_{n-1}(t) \to 0$ as $ t \to \infty$. Then relation \eqref{f} implies that the similar estimate holds for the difference
 $ f^{(n)}(t) -  f_c^{(n)} $:
\begin{equation}\label{F37}
| f^{(n)}(t) -  f_c^{(n)} | \le \tilde \gamma_{n-1} (t) \prod_{i=1}^{n} q(\sigma(x_i)),
\end{equation}
with $\tilde\gamma_{n-1}(t) \to 0$.  Using the monotonicity of the operators $e^{s \hat L_n^{\ast}}$ we conclude that the second term in  \eqref{64} can be estimated from above in the absolute value by
\begin{equation}\label{F38}
\int_0^t e^{-n (1-\kappa) (t-s)}   \tilde \gamma_{n-1} (s) ds \, \prod_{i=1}^{n} q(\sigma(x_i)).
\end{equation}
We put
\begin{equation}\label{F39}
\gamma_n (t) = \int_0^t e^{-n (1-\kappa) (t-s) }  \tilde \gamma_{n-1} (s) ds.
\end{equation}
It is easy to see that $\gamma_n (t) \to 0$ as $t \to \infty$, hence estimate \eqref{suppose} is valid for $ k^{(n)}(t) $.

\medskip

Thus we proved the strong convergence (\ref{Th1-2}). Using results
from \cite{KS} we can conclude that the solution $\{ k^{(n)}(t) \}$ of the Cauchy problem (\ref{59}) is a system
of correlation functions corresponding to the evolution of states
$\{ \mu_t \}$. The construction of the measure $\mu_c$ from the family of correlations functions
\begin{equation}\label{F40}
k^{(n)}_c \ = \  \lim_{t\to\infty} k^{(n)} (t)
\end{equation}
is based on the Lenard positivity of this family,
see Remark 4.2 in \cite{KKP} about the reconstruction of probability measures by correlation functions.

\bigskip

{\bf Conclusions.}
1. In the subcritical regime with an immigration the contact model has a stationary measure depending on the immigration rate $c(s)$ and the birth rate $ \kappa a(x,y)$ and not depending on the initial state. \\
2. The stationary correlation functions $k_c^{(n)} (x_1, \ldots, x_n)$ of this model for large $n$ are estimated from above by $n!$ in contrast with the correlation functions of the critical contact model that can grow as $(n!)^2$, see \cite{KKP, KPZ}.

\end{document}